\numberwithin{equation}{section}
\newtheorem{thm}{Theorem}[section]
\newtheorem*{thm*}{Theorem}
\newtheorem{cor}[thm]{Corollary}
\newtheorem{lem}[thm]{Lemma}
\theoremstyle{definition}
\newtheorem{rem}[thm]{Remark}
\newtheorem{dfn}[thm]{Definition}
\newtheorem{rmk}[thm]{Remark}
\newcommand{\N}{\mathds{N}}
\newcommand{\Z}{\mathds{Z}}
\newcommand{\R}{\mathds{R}}
\newcommand{\T}{\mathds{T}}
\newcommand{\diff}{\mathrm{d}}
\newcommand{\con}{\mathrm{con}}
\newcommand{\ta}{{\mathrm T}}
\begin{document}

\title[Integrable magnetic flows on the two-torus]{Integrable magnetic flows on the two-torus:\\ Zoll examples and systolic inequalities}

\author[L. Asselle]{Luca Asselle}
\address{Luca Asselle\newline\indent
Justus Liebig Universit\"at Gie\ss en, Mathematisches Institut \newline\indent Arndtstra\ss e 2, 35392 Gie\ss en, Germany}
\email{luca.asselle@math.uni-giessen.de}

\author[G. Benedetti]{Gabriele Benedetti}
\address{Gabriele Benedetti\newline\indent 
Universit\"at Heidelberg, Mathematisches Institut \newline\indent  
Im Neuenheimer Feld 205, 69120 Heidelberg, Germany
}
\email{gbenedetti@mathi.uni-heidelberg.de}

\date{\today}

\begin{abstract}
In this paper we study some aspects of integrable magnetic systems on the two-torus. On the one hand, we construct the first non-trivial examples with the property that all magnetic geodesics with unit speed are closed. On the other hand, we show that those integrable magnetic systems admitting a global surface of section satisfy a sharp systolic inequality. 
\end{abstract}

\maketitle


\vspace{-3mm}

\section{Introduction}

A \textit{magnetic system} on a closed oriented surface $\Sigma$ is a pair $(g,b)$, where $g$ is a Riemannian metric and $b:\Sigma\to \R$ is a smooth function, which we refer to as the \textit{magnetic function}. Every magnetic system yields a flow on $\mathrm S\Sigma$, the unit tangent bundle of $\Sigma$ with respect to $g$, by
\[
\Phi_{g,b}^t (q,v) = (\gamma(t),\dot \gamma(t)), \quad \forall t\in \R,
\]
where $\gamma:\R\to \Sigma$ is the unique $(g,b)$-\textit{geodesic}, that is, unit-speed curve satisfying the prescribed geodesic curvature equation
\begin{equation}
\kappa_\gamma(t)=b(\gamma(t)),\label{magneticgeodesics}
\end{equation}
with initial conditions $(\gamma(0),\dot\gamma(0))=(q,v)$.
Here $\kappa_\gamma$ is the geodesic curvature of $\gamma$ with respect to $g$ and the given orientation of the surface. For every $\lambda>0$, the solution of \eqref{magneticgeodesics} for the pairs $(\lambda^2g,\lambda b)$ and $(g,b)$ coincide. Moreover, changing simultaneously the orientation and the sign of $b$ does not change the solutions of \eqref{magneticgeodesics}, therefore we will assume throughout the paper that the average of $b$
\[
\langle b\rangle:=\frac{1}{\mathrm{area}(\Sigma,g)}\int_\Sigma b\mu_g,
\]
is non-negative. Here $\mu_g$ is the area form given by $g$ and the given orientation on $\Sigma$, and $\mathrm{area}(\Sigma,g):=\int_\Sigma\mu_g$.

We readily see that, for every $\lambda>0$, $(g,\lambda b)$-geodesics correspond to $\lambda^{-1}$-speed solutions of
\begin{equation}\label{magneticgeodesics2}
\nabla_{\dot\gamma}\dot\gamma=(b\circ\gamma)\cdot \dot \gamma^\perp,
\end{equation}
where $\nabla$ denotes the Levi-Civita connection, and $\dot \gamma^\perp$ is given by rotating $\dot \gamma$ by $\frac \pi 2$ in $\ta_\gamma\Sigma$.

This enables us to study $(g,b)$-geodesics with symplectic methods. Indeed, solutions of \eqref{magneticgeodesics2} correspond, up to the isomorphism $\ta\Sigma\to\ta^*\Sigma$ given by the metric $g$, to the solutions of the Hamiltonian system on $\ta^*\Sigma$ with kinetic Hamiltonian function
\begin{equation}\label{eq:H}
H:\ta^*\Sigma\to \R,\qquad H(q,p)=\frac 12|p|^2
\end{equation}
and twisted symplectic form
\begin{equation}\label{eq:Omega}
\Omega_{g,b} := \diff p \wedge \diff q - \pi^* (b \mu_g),
\end{equation}
where $\pi:\ta^*\Sigma\to \Sigma$ is the bundle projection. Such Hamiltonian systems are of physical interest since they model the 
motion of a charged particle under the effect of the magnetic field $b\mu_g$.

The easiest examples of magnetic systems are given by pairs $(g_{\mathrm{con}},b_{\mathrm{con}})$, where $g_{\mathrm{con}}$ has constant Gaussian curvature $K_{\mathrm{con}}$ and $b_{\mathrm{con}}$ is constant. We see that, for
\begin{equation}\label{poscurv}
b_{\mathrm{con}}^2+K_{\mathrm{con}}>0,
\end{equation}
 $(g_{\mathrm{con}},b_{\mathrm con})$-geodesics are given by geodesic spheres with radius $r=r(K_{\mathrm{con}},b_{\mathrm{con}})>0$. In particular, the corresponding flow 
$\Phi_{g_{\mathrm{con}},b_{\mathrm{con}}}^t$ yields a free $S^1$-action on $\mathrm S\Sigma$, whose quotient map associates to every geodesic sphere its center. We are prompted, therefore, to make the following general definition. 

\begin{dfn}
A magnetic system $(g,b)$ is called Zoll, if $\Phi_{g,b}^t$ yields a free $S^1$-action on $\mathrm S\Sigma$.
\end{dfn}

\begin{rmk}
If $\Sigma\neq S^2$, the classification of Seifert fibrations yields that  $(g,b)$ is Zoll if and only if all $(g,b)$-geodesics are closed. For $\Sigma=S^2$ there are instead 
examples where all $(g,b)$-geodesics are closed but the induced $S^1$-action on $\mathrm SS^2$ is only semi-free \cite{Benedetti:2016sf}.\qed
\end{rmk}

\begin{rmk}
In the particular case $b=0$ we retrieve the definition of Zoll metric on $\Sigma$, and in this case we necessarily have $\Sigma=S^2$. The space of Zoll metrics on $S^2$ has already 
been thoroughly investigated in the past century, and the local structure around $g_{\mathrm{con}}$ is nowadays well understood: Every smooth one-parameter family $s\mapsto\rho_s:S^2\to\R$ of functions with $\rho_0=0$ yields a family of metrics $s\mapsto g_s:=e^{\rho_s}g_{\mathrm{con}}$. 
Funk showed that if $g_s$ is Zoll, then $\frac{\diff\rho_s}{\diff s}(0)$ must be an odd function on $S^2$ \cite{Fun13}. Conversely, using the Nash-Moser implicit function theorem, Guillemin proved that for every odd function $f:S^2\to\R$ there is a deformation of Zoll metrics as above with $f=\frac{\diff\rho_s}{\diff s}(0)$ \cite{Gui76}. In particular, the space 
of Zoll metrics on $S^2$ is infinite dimensional. We shall notice that already the space of Zoll metrics obtained from surfaces of revolutions in $\R^3$ is infinite dimensional, as the very explicit examples constructed by Zoll in \cite{Zol03} demonstrate.\qed
\end{rmk}

If $(g,b)$ is Zoll, then \eqref{poscurv} generalizes to
\begin{equation*}
\overline K(g,b):=\langle b\rangle ^2+\frac{2\pi\chi(\Sigma)}{\mathrm{area}(\Sigma,g)}>0
\end{equation*}
and $(g,b)$-geodesics belong to the class of closed curves $\Lambda^+(\Sigma)$ on $\Sigma$ whose lift $(\gamma,\dot\gamma)$ on $\mathrm S\Sigma$ is homotopic to an oriented fiber of $\pi:\mathrm S\Sigma\to\Sigma$. Moreover, as shown in \cite{Benedetti:2018c}, 
Zoll magnetic systems are the protagonist of a sharp local magnetic systolic inequality as we now recall.

For any magnetic system, $(g,b)$-geodesics in the class $\Lambda^+(\Sigma)$ are critical points of the functional
\begin{equation}
\ell_{(g,b)}:\Lambda^+(\Sigma)\to\R,\qquad \ell_{(g,b)}(\gamma):=\ell_g(\gamma)-\int_{D^2}\Gamma^*(b\mu_g),
\end{equation}
where $\ell_g$ is the $g$-length and $\Gamma:D^2\to\Sigma$ is a capping disc for $\gamma$ obtained projecting a homotopy between $(\gamma,\dot\gamma)$ and a $\pi$-fiber on $\mathrm S\Sigma$. We denote by $\Lambda^+(g,b)$ the (possibly empty) set of $(g,b)$-geodesics in $\Lambda^+(\Sigma)$. Theorem 1.9 and Remark 1.12 in \cite{Benedetti:2018c} show that if $(g,b)$ is $C^3$-close to some $(g_0,b_0)$ Zoll, then
\[
\inf_{\gamma\in\Lambda^+(g,b)}\ell_{(g,b)}(\gamma)\leq \frac{2\pi}{\langle b\rangle+\sqrt{\strut\overline K(g,b)}}
\]
with equality if and only if $(g,b)$ is Zoll. As a special case, one recovers in a stronger topology the local systolic inequality for Zoll metrics on $S^2$ proved in \cite{ABHS17} (see also \cite{ABHS18b}).

The local magnetic systolic inequality leaves the door open to a number of interesting questions. First, for applications of the local inequality it urges us to know more about the space of Zoll magnetic systems. 
As Zoll did in the purely Riemannian case, one could start looking for non-trivial Zoll magnetic systems on $S^2$ or $\T^2$ which are rotationally symmetric, namely of the form
\begin{equation}\label{eq:sym}
g=\diff x^2+a(x)^2\diff y^2,\qquad b=b(x),
\end{equation}
where $a$ and $b$ are functions of the $x$-variable only.

Second, one would like to understand if the systolic inequality holds true also for magnetic systems which are not close to a Zoll one. Already in the purely Riemannian case this is known to be false, as the Calabi-Croke sphere demonstrates; see e.g. \cite{Sabourau:2010}. Nevertheless, Abbondandolo, Bramham, Hryniewicz and Salom\~ao did establish the global inequality in the space of rotationally symmetric metrics on $S^2$ \cite{ABHS18}. Thus, also for this question, we are prompted to look at rotationally symmetric magnetic systems first.

\section{Statement of results}
We can now present our contribution to the two questions above.
\subsection{Zoll magnetic systems on flat tori}
Our first result constructs non-trivial 1-parameter families of rotationally symmetric Zoll systems on certain flat tori. Throughout the paper, we will denote with $\T^2$ the torus obtained by quotienting $\R^2$ by the lattice $\Z\times L\Z\subset\R^2$ for some $L>0$, and with $g_{\mathrm{con}}$ the corresponding flat metric. Also, we define the countable dense set of positive numbers
\begin{equation}
\mathcal B:=\{2\pi n/\xi\ |\ n\in \N,\ \xi\in(0,\infty),\ J_1(\xi)=0\},
\label{setB}
\end{equation}
where $J_1$ is the first Bessel function of the first kind. 

\begin{thm}\label{thm:main}
Assume that $b_{\mathrm{con}}=2\pi n/\xi\in \mathcal B$. Then there exists a family $s\mapsto b_s$ of magnetic functions $b_s:\T^2\to (0,\infty)$, $s\in(-b_{\mathrm{con}}^{-1},b_{\mathrm{con}}^{-1})$, depending only on the $x$-variable such that $b_0= b_{\mathrm{con}}$, $\langle b_s\rangle=b_{\mathrm{con}}$, and $(g_{\mathrm{con}},b_s)$ is Zoll for all $s\in(-b_{\mathrm{con}}^{-1},b_{\mathrm{con}}^{-1})$. The deformation is non-trivial and unbounded:
\[
\frac{\diff b_s}{\diff s}(0)\neq0,\qquad \lim_{s\mapsto \pm b_{\mathrm{con}}^{-1}}\max b_s=\infty.
\]
If $b_{\mathrm{con}}\notin \mathcal B$, then the only magnetic function $b$ depending only on $x$ with $\langle b\rangle=b_{\mathrm{con}}$ 
such that $(g_\con,b)$ is Zoll is given by $b=b_{\mathrm{con}}$. 
\end{thm}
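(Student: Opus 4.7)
The plan is to use the $y$-translation symmetry of the magnetic flow to reduce the Zoll condition to a single linear integral equation on $b$, and then to exploit a Fourier expansion to read off the Bessel-zero criterion that defines $\mathcal{B}$.

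Since $b=b(x)$, the quantity $J:=p_y-B(x)$ is conserved under the magnetic flow, where $B$ is a primitive of $b$ on the universal cover $\R$ (with $B(x+1)=B(x)+b_\con$ because $\langle b\rangle=b_\con$). From $\dot y = J + B(x)$ and $|\dot\gamma|=1$ one finds $\dot x = \pm\sqrt{1-(J+B(x))^2}$, so on the universal cover $x$ oscillates between turning points $x_\pm(J)$ with $B(x_\pm)+J=\pm 1$. Let $T(J)$ denote the $x$-oscillation period and $\Delta y(J)$ the net $y$-shift over one such period. After the change of variables $v=B(x)+J$ and introducing the $b_\con$-periodic function $\psi(v):=1/b(B^{-1}(v))$, I would obtain
\[
T(J)=2\int_{-1}^{1}\frac{\psi(v-J)}{\sqrt{1-v^2}}\,dv,\qquad \Delta y(J)=2\int_{-1}^{1}\frac{v\,\psi(v-J)}{\sqrt{1-v^2}}\,dv.
\]

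The geometrically subtle step is to show that the Zoll condition collapses to the single equation $\Delta y(J)\equiv 0$. An orbit with parameter $J$ closes on $\T^2$ iff $\Delta y(J)/L\in\Q$, and continuity in $J$ forces $\Delta y(J)/L$ to be a fixed rational number; the additional requirement that every closed orbit be null-homotopic (i.e., lie in $\Lambda^+(\T^2)$) pins this constant at $0$. Conversely, when $\Delta y\equiv 0$ every orbit is a null-homotopic closed curve, and the Seifert-fibration argument of Remark 1.2 upgrades this to Zoll. This translation from the geometric $S^1$-action requirement to a single scalar equation is the main obstacle I expect, since it requires care about the homotopy class of orbits in $\mathrm{S}\T^2$.

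With Zoll reduced to $\Delta y\equiv 0$, Fourier analysis takes over. Expanding the $b_\con$-periodic $\psi$ as $\psi(v)=\sum_n c_n e^{2\pi inv/b_\con}$ and using the Bessel identity $\int_{-1}^{1} v\,e^{i\xi v}/\sqrt{1-v^2}\,dv=i\pi J_1(\xi)$, the Fourier coefficient of $\Delta y(J)$ at mode $n$ is proportional to $c_n J_1(2\pi n/b_\con)$. Vanishing gives $c_n J_1(2\pi n/b_\con)=0$ for all $n\neq 0$, so a non-trivial $c_n$ is possible iff $2\pi n/b_\con$ is a zero of $J_1$, i.e., iff $b_\con\in\mathcal{B}$. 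For $b_\con=2\pi n/\xi\in\mathcal{B}$ with $J_1(\xi)=0$, setting $\psi_s(v):=b_\con^{-1}+s\cos(\xi v)$ on the positivity range $s\in(-b_\con^{-1},b_\con^{-1})$ and reconstructing $b_s(x)=1/\psi_s(B_s(x))$ yields the claimed family: $\langle b_s\rangle=b_\con$ comes from the zeroth Fourier coefficient, the first derivative $db_s/ds|_{s=0}\propto\cos(2\pi nx)$ is non-zero, and $\max b_s\to\infty$ as $s\to\pm b_\con^{-1}$ because $\min\psi_s\to 0$. The converse rigidity is immediate: if $b_\con\notin\mathcal{B}$, every $c_n$ with $n\neq 0$ must vanish, so $\psi$ and hence $b$ is constant.
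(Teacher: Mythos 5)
Your proposal follows essentially the same route as the paper: exploit the $y$-symmetry to get the first integral $J=p_y-B(x)$ (the paper's $I$), reduce the Zoll condition to the vanishing of the $y$-drift of the first-return map, and compute that drift as a convolution of the ``inverse density'' $\psi=1/(b\circ B^{-1})$ (the paper's $f=\diff B^{-1}/\diff u$) against a kernel whose Fourier transform is $J_1$, so that non-constant solutions exist precisely when $2\pi n/\langle b\rangle$ hits a zero of $J_1$; your trial family $\psi_s(v)=b_{\mathrm{con}}^{-1}+s\cos(\xi v)$ is exactly the paper's $f_s$. The only real differences are cosmetic: the paper works with the primitive $S$ of the drift $\Delta$ (whose Fourier coefficients it computes in Lemma \ref{l:fourier}) because $S$ also carries the systolic information, while you work with $\Delta y$ directly after the substitution $v=\sin\theta$; also, your appeal to null-homotopy to pin the constant rotation number at $0$ can be avoided, since $\int_0^{\langle b\rangle}\Delta y(J)\,\diff J=0$ identically, so a constant drift is automatically zero.

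One step you pass over silently: your whole reduction (existence of the turning points $x_\pm(J)$, invertibility of $B$, and the very definition of $\psi$) presupposes $b>0$. In the construction this is fine because you build $b_s>0$ by hand, but in the rigidity direction ($b_{\mathrm{con}}\notin\mathcal B$) the hypothesis is only that $(g_{\mathrm{con}},b)$ is Zoll, and you must first rule out $b$ vanishing or changing sign. The paper does this via Lemma \ref{lem:int} and Lemma \ref{l:critI}: if $b(x_0)\le 0$ somewhere then (by the mean-value constraint $\langle b\rangle>0$ and continuity) $b$ vanishes at some $x_0$, the vertical circles $\{x=x_0\}$ are $(g_{\mathrm{con}},b)$-geodesics, and these are non-contractible, contradicting the fact that all geodesics of a Zoll system lie in $\Lambda^+(\T^2)$ — the same fact you already invoke elsewhere. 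With that one-line repair (Zoll $\Rightarrow$ $\mathrm{Crit}\,I=\varnothing$ $\Rightarrow$ $b>0$ on the flat torus), your argument goes through and matches the paper's proof.
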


The examples given by Theorem \ref{thm:main} are very explicit since the symmetry yields a multivalued first integral $I:\mathrm S\T^2\to\R/b_{\mathrm{con}}\Z$ that can be used to solve the equations of motion by quadratures. The exact expressions are given in Remark \ref{r:final}. In Figures \ref{figure1} and \ref{figure2} we draw the $(g_{\mathrm{con}},b_s)$-geodesics for $b_{\mathrm{con}}=2\pi/\xi_1$ and $b_{\mathrm{con}}=2\pi/\xi_2$ respectively, where $\xi_1$ and $\xi_2$ are the first two positive zeros of $J_1$. In fact, as can be seen from the formulas for $b_s$ obtained in \eqref{e:Fs} and \eqref{e:bs}, for every $m\in\N$ the systems for $b_{\mathrm{con}}$ and $b'_{\mathrm{con}}=m b_{\mathrm{con}}$ are related by the covering map $(x,y)\mapsto(mx,my)$ of $\T^2$ and so the corresponding geodesics have the same shape. Each plot in the two figures below corresponds to $s=\tfrac{k}{5}b_{\mathrm{con}}^{-1}$ for $k=0,\ldots,5$. In each plot we draw five $(g_{\mathrm{con}},b_s)$-geodesics with different colors representing different values of the first integral $I$. For $k=5$, we have $b_s(0)=+\infty$, which causes the displayed curves to have a sharp edge there. 

\begin{figure}[h]
	\centering
	\includegraphics[width=0.98\textwidth]{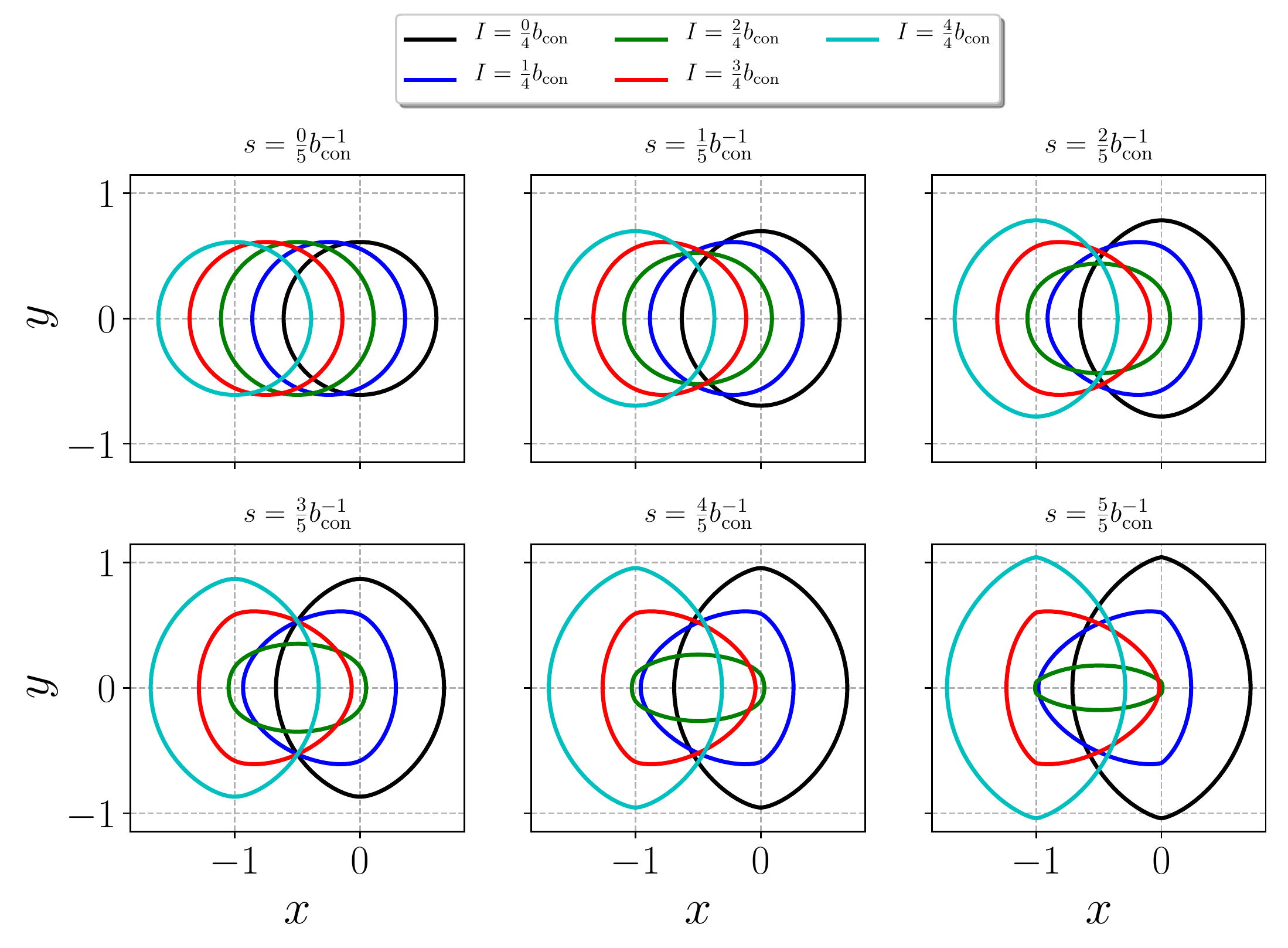}
	\caption{$(g_{\mathrm{con}},b_s)$-geodesics for the $1$-parameter family corresponding to $\xi_1$}
	\label{figure1}
\end{figure}

\begin{figure}[ht]
	\centering
	\includegraphics[width=0.98\textwidth]{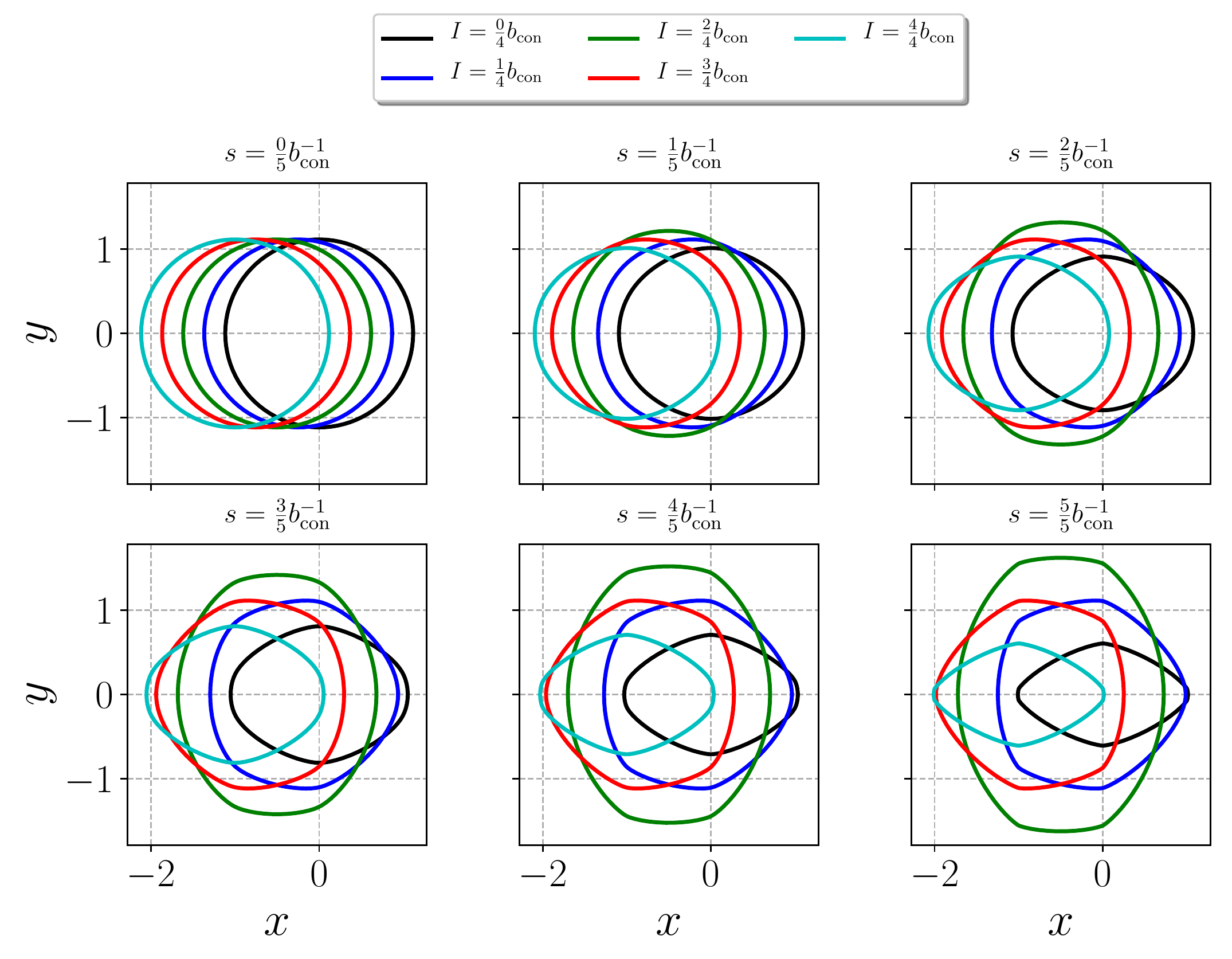}
	\caption{$(g_{\mathrm{con}},b_s)$-geodesics for the $1$-parameter family corresponding to $\xi_2$}
	\label{figure2}
\end{figure}

Let us now briefly comment on the proof of Theorem \ref{thm:main}. The argument hinges on the fact that the flow of Zoll magnetic systems with symmetry has a global surfaces of section. The Zoll condition can then be reformulated by asking that the return map is the identity or, equivalently, that all the Fourier coefficients of the map vanish. Using in an essential fashion that the metric is flat, we are able to compute such Fourier coefficients and show that there exist non-constant functions $b$ with $\langle b\rangle=b_{\mathrm{con}}$ and a return map with zero Fourier coefficients if and only if $b_{\mathrm{con}}\in \mathcal B$. In such a case, the functions $b$ with this property are parametrised by an open set of $\R^{d(b_{\mathrm{con}})}$, where $d(b_{\mathrm{con}})$ is the number of zeros $\xi$ of $J_1$ such that $b_{\mathrm{con}}=2\pi n/\xi$. In particular, if $d(b_{\mathrm{con}})=1$, then the examples of Theorem \ref{thm:main} are the only ones with flat metric and $\langle b\rangle=b_{\mathrm{con}}$. We remark that $d(b_{\mathrm{con}})=1$ for all $b_{\mathrm{con}}\in \mathcal B$ if and only if the ratio of any two positive distinct zeros of $J_1$ were irrational. Unfortunately, this property does not seem to be known.

\vspace{2mm}

Theorem \ref{thm:main} is only the first step in the investigation of Zoll magnetic systems. We do not know, for example, if \text{explicit} rotationally symmetric Zoll magnetic systems can be found on $S^2$ or on $\T^2$ for metrics $g$ different from the flat one considered above. Leaving symmetry behind, one could try to generalize Guillemin's approach to yield an infinite dimensional space of non-trivial Zoll deformations for $(g_{\mathrm{con}},b_{\mathrm{con}})$ on any surface. Such systems, however, will not be explicit as will be obtained from a Nash-Moser implicit function theorem. Having such a result will help us understand the local structure of Zoll magnetic systems at $(g_{\mathrm{con}},b_{\mathrm{con}})$. However, getting an intuition for what the global structure should be seems an even harder task. Is it true, for instance, that for every $g$ there exists a magnetic function $b$ making $(g,b)$ into a Zoll magnetic system?

We conclude this subsection mentioning three results connected with Theorem \ref{thm:main}. The first result is due to Tabachnikov, who constructs $1$-homogeneous Lagrangians $L:\ta\R^2\to\R$, whose Euler-Lagrange solutions are Euclidean circles of given radius \cite{Tab04}. Since the lift to $\R^2$ of the flow of a magnetic system on $\T^2$ is the Euler-Lagrange flow of a particular $1$-homogeneous Lagrangian, we see that Tabachnikov's result is complementary to ours. In his case, the curves are fixed to be Euclidean circles and one let the Lagrangian vary in a large class. In our case, we allow curves of different shapes (as soon as they are all closed) but we look only at Lagrangians coming from a magnetic system. We remark that the zeros of $J_1$ play an important role also in Tabachnikov's work, which suggests that the two constructions might be related. 

The second result is due to Burns and Matveev and asserts that if we require that $(g_0,b_0)$ and $(g_1,b_1)$ have the same solutions of \eqref{magneticgeodesics} and one among $b_0$ and $b_1$ is non-zero, then $(g_1,b_1)=(\lambda^2g_0,\lambda^{-1}b_0)$ for some $\lambda>0$ \cite{BM06}. This means that the flexibility of Theorem \ref{thm:main} and in Tabachnikov's result turns into rigidity, if we both want the curves to be fixed and the Lagrangian to be of magnetic type.

For the third result, we fix a subset $C\subset(0,\infty)$ and look for metrics $g$ on $\Sigma$ and functions $b:\Sigma\to\R$ such that $(g,\lambda b)$ is Zoll for all $\lambda\in C$. This condition means that the magnetic Hamiltonian system with Hamiltonian \eqref{eq:H} and symplectic form \eqref{eq:Omega} is Zoll on every energy level in the set $\tfrac{1}{2C^2}$. Lange and the first author showed in \cite{Asselle:2019b} that, for $\Sigma=\T^2$ and $C=(0,\infty)$, this condition implies that $(g,b)=(g_{\mathrm{con}},b_{\mathrm{con}})$, thus showing that trivial Zoll magnetic systems on $\T^2$ exhibit some rigidity properties. This provides an interesting contrast with the flexibility proved in Theorem \ref{thm:main}.

\subsection{A systolic inequality for symmetric systems on the torus}
Our second result establishes the sharp systolic inequality for rotationally symmetric magnetic systems on $\T^2$ possessing a global torus-like surface of section. The precise statement is as follows.
\begin{thm}\label{thm:main2}
Let $\T^2$ be a two-torus with coordinates $(x,y)$, and consider a symmetric magnetic system $(g,b)$ as in \eqref{eq:sym}, where $a,b:\R/\Z\to\R$ are functions of the $x$-variable only. If the condition
\begin{equation}\label{eq:condition}
\Big|\frac{\diff a}{\diff x}\Big|<ab
\end{equation}
is met, then the systolic inequality
\begin{equation}\label{eq:sys}
\inf_{\gamma\in\Lambda^+(g,b)}\ell_{(g,b)}(\gamma)\leq \frac{\pi}{\langle b\rangle}
\end{equation}
holds. The equality case occurs if and only if $(g,b)$ is Zoll.
\end{thm}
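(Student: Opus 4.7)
The plan is to reduce to an elementary averaging argument on the reduced 2-dimensional phase space arising from the $y$-symmetry. Writing $\dot\gamma = \cos\theta\,\partial_x + (\sin\theta/a)\,\partial_y$, equation \eqref{magneticgeodesics2} becomes
\[
\dot x = \cos\theta, \qquad \dot y = \frac{\sin\theta}{a}, \qquad \dot\theta = b(x) - \frac{a'(x)}{a(x)}\sin\theta,
\]
and a direct calculation shows that $I := a\sin\theta - B(x)$ is conserved, where $B$ is a primitive of $ab$ on $\R$ (so $B(x+1)-B(x)=\int_0^1 ab\,\diff x =: \bar{ab}$). The hypothesis $|a'|<ab$ ensures $\dot\theta>0$ everywhere, so $\Sigma_0:=\{\theta=0\}\subset \mathrm S\T^2$ is a global surface of section. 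Furthermore $f_c'(x) = b - f_c\cdot a'/a > 0$ whenever $|f_c|\le 1$, where $f_c(x):=(c+B(x))/a(x)$; hence on each level set $\{I=c\}$ the coordinate $x$ oscillates monotonically between turning points $x_\pm(c)$ determined by $f_c(x_\pm)=\pm 1$, while $\theta$ winds once around $S^1$. The reduced orbit $\gamma_c$ lifts to a closed orbit in $\Lambda^+(\Sigma)$ precisely when the $y$-drift $\Delta y(c) := \oint_{\gamma_c}\diff y$ vanishes.

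Next, choosing the primitive $\lambda := p_x\,\diff x + (p_y-B(x))\,\diff y$ of $\Omega_{g,b}$ on the universal cover, and using that $p_y-B=I$ is a first integral, one obtains $\oint_{\gamma_c}\lambda = \oint\cos^2\theta\,\diff t + I\cdot\Delta y(c)$, while the left-hand side equals $\ell_{(g,b)}(\gamma_c)$ whenever $\gamma_c\in\Lambda^+$. Setting $\tilde\ell(c) := \oint_{\gamma_c}\cos^2\theta\,\diff t$ and parametrising each half of the orbit by $x$ yields the explicit formulas
\[
\tilde\ell(c) = 2\int_{x_-(c)}^{x_+(c)}\sqrt{1-f_c(x)^2}\,\diff x, \qquad \Delta y(c) = 2\int_{x_-(c)}^{x_+(c)}\frac{f_c(x)}{a(x)\sqrt{1-f_c(x)^2}}\,\diff x,
\]
so that $\ell_{(g,b)}(\gamma_c)=\tilde\ell(c)$ for every $c$ in $S := \Delta y^{-1}(0)$.

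The crucial step is the elementary identity
\[
\frac{\diff\tilde\ell}{\diff c} \;=\; -\Delta y(c),
\]
obtained by differentiating under the integral sign in the formula for $\tilde\ell$: the boundary contributions vanish since $\sqrt{1-f_c(x_\pm)^2}=0$, and $\partial_c\sqrt{1-f_c^2}=-f_c/(a\sqrt{1-f_c^2})$. Consequently, the critical points of the continuous periodic function $\tilde\ell:\R/\bar{ab}\Z\to\R$ coincide with $S$, and $\tilde\ell(c)=\ell_{(g,b)}(\gamma_c)$ at each such $c$.

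To conclude, I would compute the mean of $\tilde\ell$. The flow-invariant volume on $\mathrm S\T^2$ is $\sigma = a(x)\,\diff x\wedge\diff y\wedge\diff\theta$, and contracting $\sigma$ with the magnetic vector field at $\theta=0$ yields the preserved area form $\omega_0 = ab\,\diff x\wedge\diff y$ on $\Sigma_0$. Since $\cos^2\theta$ is a well-defined function on $\mathrm S\T^2$, the flow-Fubini identity gives
\[
\int_{\Sigma_0}\tilde\ell\,\omega_0 \;=\; \int_{\mathrm S\T^2}\cos^2\theta\,\sigma \;=\; \pi\int_{\T^2} a\,\diff x\,\diff y, \qquad \int_{\Sigma_0}\omega_0 \;=\; \langle b\rangle\int_{\T^2} a\,\diff x\,\diff y,
\]
so the $\omega_0$-mean of $\tilde\ell$ equals $\pi/\langle b\rangle$. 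Therefore the minimum of the periodic function $\tilde\ell$ is attained at some $c^*\in S$, the corresponding $\gamma_{c^*}\in\Lambda^+(g,b)$ is realised, and $\ell_{(g,b)}(\gamma_{c^*}) = \tilde\ell(c^*)\le \pi/\langle b\rangle$. Equality forces $\tilde\ell$ to be constant, hence $\Delta y\equiv 0$, so every orbit closes in $\Lambda^+$ and the system is Zoll. The main obstacle is spotting the derivative identity $\diff\tilde\ell/\diff c = -\Delta y$ together with the right choice of invariant measure producing the sharp constant $\pi/\langle b\rangle$; once both are in place, everything reduces to elementary computation.
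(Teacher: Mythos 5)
Your proposal is correct and follows essentially the same strategy as the paper: condition \eqref{eq:condition} makes $\theta$ strictly increasing and the tori $\{\theta=\theta_0\}$ global surfaces of section, the $y$-drift $\Delta y$ is (up to sign) the derivative of the function $c\mapsto\tilde\ell(c)$, which equals $\ell_{(g,b)}(\gamma_c)$ exactly at its critical points, and its mean value is $\pi/\langle b\rangle$, so $\min\tilde\ell\le\pi/\langle b\rangle$ with equality if and only if $\tilde\ell$ is constant, i.e.\ $(g,b)$ is Zoll. The only differences are computational: you parametrize orbits by $x$ between the turning points (the paper uses the $\theta$-parametrization, which avoids the square-root singularity when differentiating under the integral sign), and you obtain the mean via the invariant volume $a\,\diff x\wedge\diff y\wedge\diff\theta$ and the section form $ab\,\diff x\wedge\diff y$ rather than the paper's direct $(I,\theta)$ change of variables with the normalization $\int_0^1 a\,\diff x=1$.
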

The importance of condition \eqref{eq:condition} will be thoroughly discussed in the next section. Here we mention only two properties which are equivalent to it. The first is that no curve with constant $x$-coordinate is a $(g,b)$-magnetic geodesic. In particular, all symmetric Zoll magnetic systems must satisfy \eqref{eq:condition} since in this case all closed $(g,b)$-geodesics are contractible. The second is that for each $\theta_0\in\R/2\pi\Z$ the embedding
\begin{equation}\label{e:t2}
\iota_{\theta_0}:\T^2\to\mathrm S\T^2,\quad (x,y)\mapsto \cos\theta_0 \partial_x+\frac{\sin\theta_0}{a(x)}\partial_y
\end{equation}
is a torus-like global surface of section for the flow $\Phi_{(g,b)}$ inside $\mathrm S\T^2$, and the associated first-return map can be written quite explicitly in terms of the first integral $I$, a fact that will be key in the proof of Theorem \ref{thm:main2}.

A direction of future research will be to decide if \eqref{eq:sys} holds for symmetric systems not satisfying \eqref{eq:condition}. In this more general case one can still construct annulus-like surfaces of section which, however, are not global. For this reason, already showing that $\Lambda^+(g,b)\neq\varnothing$ is an interesting problem.
\subsection{Structure of the paper}
Section \ref{s:2} is devoted to establishing basic properties of rotationally invariant magnetic systems on $\T^2$ and ends with the proof of Theorem \ref{thm:main2}. In Section \ref{s:3} we prove Theorem \ref{thm:main}.

\section{The first integral $I$}
\label{s:2}

As above we denote with $(x,y)$ angular coordinates on $\T^2=[0,1]\times [0,L]/\sim$, and consider a magnetic system $(g,b)$ as in \eqref{eq:sym}. We write $a:\R/\Z\to(0,\infty)$ and $b:\R/\Z\to \R$ for the warping and magnetic functions respectively, which depend on the $x$-variable only. Up to rescaling the $y$-variable $(x,y)\mapsto(x,cy)$ which changes $L$ to $cL$ and $a$ to $a/c$, we assume that the normalization
\begin{equation}\label{e:norm}
\int_0^1a(x)\diff x=1
\end{equation}
holds. This implies that
\begin{equation}\label{eq:intb}
\langle b\rangle=\int_0^1a(x)b(x)\diff x.
\end{equation}
We define the function $B:\R\to\R$ by the properties
\[
B(0)=0,\qquad B'(x)=a(x)b(x),\quad\forall\,x\in\R
\]
so that
\[
B(x+1)=B(x)+\langle b\rangle,\qquad \forall\,x\in\R.
\]
In view of this, we will also write $B:\R/\Z\to\R/\langle b\rangle\Z$ for the corresponding quotient map. Here and below we use a prime to denote derivatives with respect to the $x$-variable.

From the formula for $g$, the embeddings $\iota_{\theta_0}$ in \eqref{e:t2} are well-defined and we have an angular function $\theta:\mathrm S\T^2\to\R/2\pi\Z$ yielding the oriented angle between an element $v\in\mathrm S\T^2$ and $\partial_x$. If $v=v_x\partial_x+v_y\partial_y$, then
\begin{equation}\label{eq:vxvy}
v_x=\cos\theta,\qquad v_y=\frac{\sin\theta}{a(x)}.
\end{equation}
On the other hand, the flow of $\partial_y$ preserves both the metric $g$ and the magnetic function $b$. Therefore, applying Noether's Theorem, we get a multivalued conserved quantity along $(g,b)$-geodesics.
\begin{lem}\label{lem:int}
The function
\[
I:\mathrm S\T^2\to\R/\langle b\rangle\Z,\qquad I(x,y,\theta)=a(x)\sin\theta-B(x).
\]
is constant along $(g,b)$-geodesics. The critical set of $I$ can be expressed in the coordinates $(x,y,\theta)$ by
\begin{equation}\label{eq:critI}
\mathrm{Crit}\, I=\mathrm{Crit}\, I_-\times(\R/L\Z)\times\{-\pi/2\}\ \sqcup\  \mathrm{Crit}\, I_+\times(\R/L\Z)\times\{+\pi/2\},
\end{equation}
where
\[
I_\pm:\R/\Z\to \R/\langle b\rangle\Z,\qquad I_\pm(x)=\pm a(x)-B(x).
\]
Finally, a $(g,b)$-geodesic parametrizes the circle $\{x=x_0\}$ in the positive (respectively negative) $y$-direction if and only if $x_0$ belongs to $\mathrm{Crit}\, I_+$ (respectively to $\mathrm{Crit}\, I_-$).
\end{lem}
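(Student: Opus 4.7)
The plan is to recognize the conservation of $I$ as an instance of Noether's theorem for a magnetic Lagrangian written in a chart where the $y$-translation symmetry is manifest, and then to identify the resulting conjugate momentum with $I$ through the parametrization \eqref{eq:vxvy} of $\mathrm{S}\T^2$ by the angle $\theta$. Lifting to the universal cover $\R\times(\R/L\Z)$, the magnetic two-form reads $b\mu_g = a(x)b(x)\diff x\wedge\diff y = \diff B\wedge\diff y$, so that $-B(x)\diff y$ is a primitive. Consequently, $(g,b)$-geodesics lift to unit-speed Euler--Lagrange solutions of
\[
L(x,y,\dot x,\dot y) = \tfrac12\bigl(\dot x^2 + a(x)^2\dot y^2\bigr) - B(x)\dot y,
\]
with the sign fixed by checking the resulting Euler--Lagrange equation for $x$ against \eqref{magneticgeodesics2} in the oriented orthonormal frame $\{\partial_x, a^{-1}\partial_y\}$. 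Since $L$ is independent of $y$, the conjugate momentum
\[
p_y = \frac{\partial L}{\partial \dot y} = a(x)^2\dot y - B(x)
\]
is conserved. Substituting $\dot y = \sin\theta/a(x)$ from \eqref{eq:vxvy} gives $p_y = a(x)\sin\theta - B(x)$, which is the formula for $I$. The passage to the quotient $\mathrm{S}\T^2$ is where one must be careful: since $B(x+1) = B(x) + \langle b\rangle$, the lift is only $\langle b\rangle$-periodic in $x$, which forces $I$ to land in $\R/\langle b\rangle\Z$, exactly as stated.

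For the description of $\mathrm{Crit}\,I$, I would simply compute the differential. Since $I$ is independent of $y$ and $\partial_\theta I = a(x)\cos\theta$, any critical point must satisfy $\theta\in\{\pi/2,-\pi/2\}$. On the two slices $\{\theta = \pm\pi/2\}$ the remaining derivative is
\[
\partial_x I\big|_{\theta=\pm\pi/2} = \pm a'(x) - a(x)b(x) = I_\pm'(x),
\]
so the vanishing locus is $\mathrm{Crit}\,I_\pm\times(\R/L\Z)\times\{\pm\pi/2\}$, reproducing \eqref{eq:critI}.

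For the final assertion, a curve of the form $t\mapsto(x_0,y(t))$ corresponds, at unit speed, to $\theta\equiv \pi/2$ with $\dot y = 1/a(x_0)$ in the positive $y$-direction and to $\theta\equiv -\pi/2$ with $\dot y = -1/a(x_0)$ in the negative one. The Euler--Lagrange equation for $x$ associated to $L$ is
\[
\ddot x = a(x)a'(x)\dot y^2 - a(x)b(x)\dot y,
\]
so the constraint $\dot x\equiv 0$ is equivalent to $a'(x_0) = \pm a(x_0)b(x_0)$, i.e.\ $I_\pm'(x_0)=0$, which is exactly $x_0\in\mathrm{Crit}\,I_\pm$. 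The argument is essentially mechanical once the Lagrangian has been written down; the only genuinely conceptual point, and what I would expect to be the subtlest, is book-keeping the multivaluedness so that $I$ descends from the universal cover to a well-defined function $\mathrm{S}\T^2\to\R/\langle b\rangle\Z$, which amounts to the identity $B(x+1)-B(x) = \langle b\rangle$ coming from \eqref{e:norm} and \eqref{eq:intb}.
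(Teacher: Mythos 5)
Your proposal is correct, and its skeleton (Noether for the $y$-translation symmetry, then a direct computation of $\diff I$, then a characterization of the vertical circles) mirrors the paper's; the difference is in the formalism. The paper stays in the Hamiltonian picture on $(\ta^*\T^2,\Omega_{(g,b)})$: it observes $\iota_{\partial_y}\Omega_{(g,b)}=\diff\big(p_y-B(x)\big)$, so the multivalued Hamiltonian $p_y-B(x)$ generates the lifted $\partial_y$-flow, and Noether plus restriction to $H^{-1}(1/2)$ gives the conservation of $I$; the final assertion is obtained abstractly, by noting that a geodesic runs along $\{x=x_0\}$ iff $\diff H$ and $\diff(p_y-B)$ have the same kernel along the orbit, i.e.\ iff the orbit lies in $\mathrm{Crit}\,I$, and then reading off the direction from \eqref{eq:critI}. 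You instead pass to the cover unwinding the $x$-variable (note: $\R\times(\R/L\Z)$ is not the universal cover, and the primitive of $b\mu_g=\diff B\wedge\diff y$ is $+B(x)\diff y$, not $-B(x)\diff y$ --- but these slips are harmless, since your Lagrangian $L=\tfrac12(\dot x^2+a^2\dot y^2)-B(x)\dot y$ does reproduce \eqref{magneticgeodesics2} with the paper's orientation conventions, exactly as you propose to check), use the cyclic variable $y$ to get the conserved momentum $p_y=a^2\dot y-B(x)$, and handle the descent to $\R/\langle b\rangle\Z$ via $B(x+1)=B(x)+\langle b\rangle$, which the paper deals with implicitly through the multivalued Hamiltonian. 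For the last claim your route is more computational: you substitute $x\equiv x_0$, $\dot y=\pm1/a(x_0)$ into the $x$-Euler--Lagrange equation and get $\pm a'(x_0)=a(x_0)b(x_0)$ (the $y$-equation being automatic, since $p_y$ is manifestly constant there --- worth one line); this is a perfectly valid and arguably more elementary substitute for the paper's kernel argument, at the cost of an explicit Euler--Lagrange computation, whereas the paper's version reuses the already-established description \eqref{eq:critI} of $\mathrm{Crit}\,I$ without writing any second-order ODE.
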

\begin{proof}
The lifted flow of $\partial_y$ to the twisted cotangent bundle $(\ta^*\T^2,\Omega_{(g,b)})$, where $\Omega_{(g,b)}$ is given by \eqref{eq:Omega}, is generated by the multivalued Hamiltonian $p_y-B(x)$. Indeed, 
\begin{align*}
\iota_{\partial_y}\Omega_{(g,b)} &=\iota_{\partial_y}\big(\diff p_x\wedge\diff x+\diff p_y\wedge\diff y\big)-\iota_{\partial_y}\big(a(x)b(x)\diff x\wedge\diff y\big)\\
						&=\diff p_y-a(x)b(x)\diff x\\
						&=\diff\big(p_y-B(x)\big).
\end{align*}
Since the kinetic Hamiltonian $H$ from \eqref{eq:H} is invariant under the flow of $\partial_y$, Noether's theorem implies that $p_y-B(x)$ is invariant under the Hamiltonian flow of $H$. As $\Phi_{(g,b)}$ is the restriction of the Hamiltonian flow of $H$ to $\mathrm S\T^2$, after applying the metric isomorphism $\ta\T^2\to\ta^*\T^2$, and $p_y=a(x)\sin\theta$ holds on $\mathrm S\T^2$, we get that $I$ is a first integral for $\Phi_{(g,b)}$.

We compute
\begin{equation}\label{eq:formI}
\frac{\partial I}{\partial x}=a'(x)\sin\theta-a(x)b(x),\qquad \frac{\partial I}{\partial\theta}=a(x)\cos\theta.
\end{equation}
The second quantity is zero if and only if $\cos\theta=0$. In this case, $\theta=\pm\pi/2$ and the first quantity is zero if and only if $x$ is a critical point of $I_\pm$.

Finally, a $(g,b)$-geodesic parametrizes $\{x=x_0\}$ if and only if the corresponding trajectory of the Hamiltonian flow of $H$ is a reparametrization of a trajectory of the Hamiltonian flow of $p_y-B(x)$. By the definition of Hamiltonian vector fields, this happens if and only if $\diff(p_y-B(x))$ and $\diff H$ have the same kernel at this orbit. Since $\mathrm S\T^2$ corresponds to $H^{-1}(1/2)$ and the restriction of $p_y-B(x)$ to this level set corresponds to $I$, we see that the above conditions are equivalent to the fact that the orbit is contained in the critical set of $I$. By \eqref{eq:critI} the assertion follows.   
\end{proof}

\begin{cor}
The equations of motion of a unit-speed $(g,b)$-geodesic $(x,y):\R\to\T^2$, whose tangent vector makes an angle $\theta:\R\to\R/2\pi\Z$ with $\partial_x$, read
\begin{equation}
\left \{\begin{aligned}\dot x &= \cos \theta, \\ \dot y &= \displaystyle \frac{\sin \theta}{a(x)}, \\ \dot \theta &= b(x) - \frac{a'(x)}{a(x)} \sin \theta.
\end{aligned}\right.
\label{hamiltoneq}
\end{equation}
\end{cor}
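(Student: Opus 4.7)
The plan is to derive the three equations in \eqref{hamiltoneq} as direct consequences of the preceding lemma together with the definition of the angle function $\theta$. The first two equations are essentially a restatement of \eqref{eq:vxvy}: a unit-speed $(g,b)$-geodesic $\gamma=(x,y)$ has $\dot\gamma=\dot x\,\partial_x+\dot y\,\partial_y$, and since by hypothesis $\dot\gamma$ makes an angle $\theta$ with $\partial_x$ in the oriented orthonormal frame $(\partial_x,\partial_y/a(x))$, the identification $\dot x=v_x=\cos\theta$ and $\dot y=v_y=\sin\theta/a(x)$ is immediate.

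For the third equation, the natural route is to exploit the conservation law furnished by Lemma \ref{lem:int}. Writing $I(x,y,\theta)=a(x)\sin\theta-B(x)$ and differentiating $I(x(t),y(t),\theta(t))\equiv\text{const}$ along the geodesic using the chain rule, I would obtain
\begin{equation*}
0 \;=\; a'(x)\,\dot x\,\sin\theta \,+\, a(x)\cos\theta\,\dot\theta \,-\, B'(x)\,\dot x.
\end{equation*}
Substituting $\dot x=\cos\theta$ from the first equation of \eqref{hamiltoneq} and $B'(x)=a(x)b(x)$ from the definition of $B$, this becomes
\begin{equation*}
0 \;=\; \cos\theta\,\bigl[\,a'(x)\sin\theta \,+\, a(x)\dot\theta \,-\, a(x)b(x)\,\bigr].
\end{equation*}
On the open set $\{\cos\theta\neq 0\}$ one may divide by $a(x)\cos\theta$ (recall $a>0$) and obtain exactly $\dot\theta=b(x)-\tfrac{a'(x)}{a(x)}\sin\theta$.

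The only potential obstacle is the case $\cos\theta(t_0)=0$, which is not excluded along a generic orbit: such values of $t$ may occur in isolation without the orbit being trapped in the critical set of $I$. This is however only an apparent difficulty, since both sides of the desired identity are continuous functions of $t$ along the geodesic, so validity on the open dense subset $\{\cos\theta\neq 0\}$ extends by continuity to all $t$. As an alternative (and logically independent) verification that avoids the case distinction altogether, one can compute $\nabla_{\dot\gamma}\dot\gamma=b(\gamma)\,\dot\gamma^{\perp}$ directly from the Levi-Civita connection of $g=\diff x^2+a(x)^2\diff y^2$, whose only non-vanishing Christoffel symbols are $\Gamma^x_{yy}=-a(x)a'(x)$ and $\Gamma^y_{xy}=\Gamma^y_{yx}=a'(x)/a(x)$; taking the $\partial_x$-component of the equation and substituting $\ddot x=-\sin\theta\,\dot\theta$ and $\dot y=\sin\theta/a(x)$ yields the third equation at once, everywhere along $\gamma$.
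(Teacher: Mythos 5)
Your main step is the same as the paper's: differentiate the first integral $I$ along the geodesic and divide by $\cos\theta$. The gap lies in how you dispose of the locus $\cos\theta=0$. Your continuity argument presupposes that $\{\cos\theta\neq 0\}$ is dense along every orbit, but the corollary is stated for arbitrary symmetric $(g,b)$ (condition \eqref{eq:condition} is \emph{not} assumed), and whenever $\mathrm{Crit}\,I_\pm\neq\varnothing$ there are $(g,b)$-geodesics parametrizing a circle $\{x=x_0\}$, along which $\cos\theta\equiv 0$; for these orbits the set $\{\cos\theta\neq0\}$ is empty and continuity gives nothing. This degenerate case is exactly what the paper handles: arguing by contradiction, if \eqref{eq:dottheta} failed somewhere, the identity $0=\dot I=\cos\theta\,(a\dot\theta+\partial_xI)$ would force the orbit onto a vertical circle, hence by the last assertion of Lemma \ref{lem:int} one has $x_0\in\mathrm{Crit}\,I_\pm$, i.e.\ $\partial_x I(x_0,\pm\pi/2)=0$, contradicting the assumed failure of \eqref{eq:dottheta} since $\dot\theta=0$ there. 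Some such input (the final statement of Lemma \ref{lem:int}, or equivalently the prescribed-curvature equation on vertical circles) is genuinely needed and is missing from your first argument.

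Your fallback computation does not close the gap as stated. The $\partial_x$-component of $\nabla_{\dot\gamma}\dot\gamma=b\,\dot\gamma^{\perp}$ reads $-\sin\theta\,\dot\theta-\tfrac{a'}{a}\sin^2\theta=-b\sin\theta$, i.e.\ $\sin\theta\bigl(\dot\theta+\tfrac{a'}{a}\sin\theta-b\bigr)=0$, so it yields the third equation only where $\sin\theta\neq0$, not ``at once, everywhere''. Ironically, this is exactly complementary to your first argument: the $\partial_x$-component covers $\theta=\pm\pi/2$ while the first-integral argument covers $\cos\theta\neq0$, so combining the two loci --- or using the $\partial_y$-component, which carries the factor $\cos\theta$, or simply the geodesic-curvature identity $\kappa_\gamma=\dot\theta+\tfrac{a'}{a}\sin\theta=b$ --- does prove the claim everywhere. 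But as written, neither of your two arguments, nor the justification offered for either, is complete on its own.
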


\begin{proof}
The first two equations follows immediately from \eqref{eq:vxvy}. The third equation is equivalent to
\begin{equation}\label{eq:dottheta}
\dot\theta=-\frac{1}{a}\frac{\partial I}{\partial x}
\end{equation}
Let us assume by contradiction that this equation does not hold. Since $I$ is a first integral, we compute using the first equation in \eqref{hamiltoneq}
\begin{equation}\label{eq:dotI}
0=\dot I=\frac{\partial I}{\partial x}\dot x+\frac{\partial I}{\partial \theta}\dot \theta=\cos\theta\Big(a\dot\theta+\frac{\partial I}{\partial x}\Big).
\end{equation}
Then $\cos\theta\equiv 0$ and $x\equiv x_0\in\R/\Z$, $\theta\equiv\pm\tfrac{\pi}{2}$. According to Lemma \ref{lem:int}, this means that $I_\pm$ has a critical point at $x_0$. However, using the assumption that we want to contradict and the fact that $\dot\theta=0$, we get
\[
\frac{\diff I_\pm}{\diff x}(x_0)=\frac{\partial I}{\partial x}(x_0,\pm\pi/2)\neq0.\qedhere
\]
\end{proof}
As observed in Section 2, Lemma \ref{lem:int} implies that if $(g,b)$ is Zoll, then $\mathrm{Crit}I=\varnothing$. This prompts us to better investigate this condition.
\begin{lem}\label{l:critI}
The following conditions are equivalent:
\begin{enumerate}
\item $\mathrm{Crit}I=\varnothing$;\vspace{3pt}
\item $|a'|<ab$;\vspace{3pt}
\item $\frac{\partial I}{\partial x}<0$;\vspace{3pt}
\item the function $\theta$ is strictly monotone increasing along all $(g,b)$-geodesics;\vspace{3pt}
\item $-\diff\theta\wedge \omega_{(g,b)}$ is a volume form on $\mathrm S\T^2$, where $\omega_{(g,b)}$ is the pull-back of $\Omega_{(g,b)}$ on $\mathrm S\T^2$;\vspace{3pt}
\item for each $\theta_0\in\R/2\pi\Z$ the torus embedding $\iota_{\theta_0}$ from \eqref{e:t2} is a global surface of section for $\Phi_{(g,b)}$.
\end{enumerate} 
\end{lem}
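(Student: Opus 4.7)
The plan is to establish the six equivalences by a small chain: first the middle block (2) $\iff$ (3) $\iff$ (4), then (1) $\iff$ (2), and finally (4) $\iff$ (5) and (4) $\iff$ (6). All implications are reasonably direct consequences of the explicit formulas from \eqref{eq:formI} and the equations of motion in \eqref{hamiltoneq}.

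I would begin with the block (2) $\iff$ (3) $\iff$ (4). For fixed $x$, the function $\theta\mapsto\tfrac{\partial I}{\partial x}(x,\theta)=a'(x)\sin\theta-a(x)b(x)$ attains its maximum $|a'(x)|-a(x)b(x)$, so the inequality $\tfrac{\partial I}{\partial x}<0$ on $\mathrm S\T^2$ is precisely $|a'|<ab$, proving (2) $\iff$ (3). Since $a>0$, the identity $\dot\theta=-\tfrac{1}{a}\tfrac{\partial I}{\partial x}$, which is the content of \eqref{eq:dottheta}, immediately yields (3) $\iff$ (4).

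Next, for (1) $\iff$ (2), I would use the decomposition of $\mathrm{Crit}\,I$ from \eqref{eq:critI}, which reduces matters to the assertion that $I_\pm'(x)=\pm a'(x)-a(x)b(x)$ is nowhere zero on $\R/\Z$. The direction (2) $\Rightarrow$ (1) is immediate, since $|a'|<ab$ forces $I_\pm'<0$. Conversely, if $I_\pm'$ is nowhere zero then by continuity it has constant sign on the connected circle $\R/\Z$; combining this with the integral identity $\int_0^1 I_\pm'(x)\diff x=\pm(a(1)-a(0))-\langle b\rangle=-\langle b\rangle\leq 0$, which follows from periodicity of $a$ and \eqref{eq:intb}, this constant sign must be strictly negative. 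This gives $|a'|<ab$, and as a byproduct forces $\langle b\rangle>0$.

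Finally, I would address (4) $\iff$ (5) and (4) $\iff$ (6) by exploiting the fact that the generator $X$ of $\Phi_{(g,b)}$ spans the kernel of $\omega_{(g,b)}$ on $\mathrm S\T^2$. Contracting gives $\iota_X\bigl(-\diff\theta\wedge\omega_{(g,b)}\bigr)=-\dot\theta\,\omega_{(g,b)}$, and since $\omega_{(g,b)}$ has rank $2$ and does not vanish as a $2$-form, the $3$-form $-\diff\theta\wedge\omega_{(g,b)}$ is a volume form precisely when $\dot\theta$ is nowhere zero; connectedness of $\mathrm S\T^2$ together with $\dot\theta|_{\theta=0}=b(x)$ and the convention $\langle b\rangle\geq 0$ then promote non-vanishing to $\dot\theta>0$, giving (4) $\iff$ (5). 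For (4) $\iff$ (6), the transversality of $\iota_{\theta_0}$ to $X$ is equivalent to $\dot\theta\neq 0$ on each level set, and strict monotonicity together with compactness of $\mathrm S\T^2$ yields a uniform lower bound $\dot\theta\geq c>0$, which forces every orbit to hit each level $\{\theta=\theta_0\}$ in bounded time. The main technical subtlety I expect is the sign-pinning in (1) $\Rightarrow$ (2), where non-vanishing of $I_\pm'$ alone is not enough and one genuinely needs the integration argument against $-\langle b\rangle$ to rule out $I_\pm'>0$.
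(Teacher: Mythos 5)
Your proof is correct, and its core coincides with the paper's: the block $(2)\Leftrightarrow(3)\Leftrightarrow(4)$ via \eqref{eq:formI} and \eqref{eq:dottheta}, and $(1)\Leftrightarrow(2)$ via the decomposition \eqref{eq:critI} together with the integral identity $\int_0^1(\pm a'-ab)\,\diff x=-\langle b\rangle\le 0$ to pin the sign, are exactly the paper's arguments. Where you diverge is in how $(5)$ and $(6)$ are attached to the chain. For $(5)$ the paper computes $\omega_{(g,b)}$ explicitly in the coordinates $(x,y,\theta)$ and reads off $-\diff\theta\wedge\omega_{(g,b)}$ as $\tfrac{\partial I}{\partial x}$ times the coordinate volume, linking $(5)$ to $(3)$; you instead use that the generator $X$ spans $\ker\omega_{(g,b)}$ and contract, linking $(5)$ to $(4)$. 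Your route avoids the coordinate computation and, notably, makes explicit the sign-pinning step (non-vanishing of $\dot\theta$ plus $\dot\theta|_{\theta=0}=b$ and $\langle b\rangle\ge 0$ gives $\dot\theta>0$) that the paper leaves implicit in ``from which we see the desired equivalence''; alternatively one can close this by noting that $\dot\theta\neq0$ everywhere already gives $(1)$ and hence $(4)$ through the established equivalences. For $(6)$, your direction $(4)\Rightarrow(6)$ (uniform bound $\dot\theta\ge c>0$ by compactness) is the paper's, spelled out in more detail; but for the converse the paper argues $(6)\Rightarrow(1)$ using only that a global surface of section meets every orbit and contains no full orbit, exploiting Lemma \ref{lem:int}'s identification of $\mathrm{Crit}\,I_\pm$ with the vertical geodesics $\{x=x_0\}$, whereas you argue $(6)\Rightarrow(4)$ from transversality of the flow to every level $\{\theta=\theta_0\}$. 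Your version is fine under the standard definition of a surface of section (which includes transversality, and is what makes the return map used later well defined), while the paper's version has the mild advantage of needing only the weaker intersection/containment properties; also note that in your $(6)\Rightarrow(4)$ you again need the sign-pinning (or the detour through $(1)$), which you state only in the $(5)$ discussion — worth making explicit, though it is not a gap.
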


\begin{proof}
$(1)\Leftrightarrow(2)$: The critical set of $I$ is empty if and only if the critical sets of $I_-$ and $I_+$ are empty. This means that $\pm a'-ab\neq0$. By continuity, we either have $\pm a'-ab>0$ or $\pm a'-ab<0$. However, 
\[
\int_0^1(\pm a'-ab)\diff x=-\int_0^1ab\,\diff x=-\langle b\rangle\leq 0,
\]
which implies that only the second possibility can hold.

$(2)\Leftrightarrow(3)$: $|a'|<ab$ if and only if $a'\sin\theta-ab<0$ for all $\theta$. By \eqref{eq:formI} the conclusion follows.

$(3)\Leftrightarrow(4)$: It follows at once from \eqref{eq:dottheta}.

$(3)\Leftrightarrow(5)$: We compute
\begin{align*}
\omega_{(g,b)} &=\diff(\cos\theta\diff x+a\sin\theta\diff y)-ab\diff x\wedge\diff y\\
			&=(a'\sin\theta-ab)\diff x\wedge\diff y+\diff\theta\wedge(-\sin\theta\diff x+a\cos\theta\diff y).
\end{align*}
Therefore, $-\diff\theta\wedge \omega_{(g,b)}=\tfrac{\partial I}{\partial x}\diff\theta\wedge \diff x\wedge\diff y$ from which we see the desired equivalence.

$(4)\Rightarrow(6)$: If $\theta$ is strictly monotone increasing along all flow lines of $\Phi_{(g,b)}$, then for every $\theta_0\in\R/2\pi\Z$ the torus $\R/\Z\times(\R/L\Z)\times\{\theta_0\}$ is a global surface of section.

$(6)\Rightarrow(1)$: Assume that $\iota_{\theta_0}$ is a global surface of section. Then,
\begin{enumerate}[(i)]
\item no orbit of $\Phi_{(g,b)}$ can be entirely contained in $\iota_{\theta_0}$;
\item all orbits of $\Phi_{(g,b)}$ must intersect $\iota_{\theta_0}$.
\end{enumerate}
If $\theta_0\neq\pm\pi/2$, then (ii) is sufficient to see that $\mathrm{Crit}I_\pm=\varnothing$. If $\theta_0=\pi/2$ (the case $\theta_0=-\pi/2$ being analogous), then from (i) we see that $\mathrm{Crit}I_+=\varnothing$ and from (ii) that $\mathrm{Crit}I_-=\varnothing$. 
\end{proof}
\begin{rem}
Condition (e) implies that the odd-symplectic form $\omega_{(g,b)}$ is stable. Even if we will not make use of this in the present paper, let us observe that this has the consequence that an arbitrary magnetic system $(g_1,b_1)$ close to $(g,b)$ satisfying (a) will also have an associated stable odd-symplectic form and therefore it will possess a contractible $(g_1,b_1)$-geodesic according to \cite{Asselle:2014hc}. \qed
\end{rem}

\begin{rem}
A sufficient but not necessary condition for $\mathrm{Crit}I=\varnothing$ is the positivity of the magnetic curvature of $(g,b)$. This phenomenon is analogous to the one ensuring that a convex Riemannian sphere of revolution admits exactly one equator (see also \cite[Proposition 4.6]{Ben16} for the case of magnetic spheres). Let us recall that the magnetic curvature, which in general can be obtained from Jacobi fields along $(g,b)$-geodesics, in the symmetric setting takes the form
\[
K_{(g,b)}:\mathrm S\T^2\to\R,\qquad K_{(g,b)}(x,y,\theta) = b^2(x) + b'(x) \sin \varphi - \frac{a''(x)}{a(x)}.
\]
If we assume that $K_{(g,b)}$ is everywhere positive, then we claim that all critical points of $I_-$ and $I_+$ must be non-degenerate maxima. This implies that the set of critical points of $I_-$ and $I_+$ is empty. To prove the claim, we take a critical point $x_0$ for $I_+$ (the other case being completely analogous) such that $a'(x_0)-a(x_0)b(x_0)=0$. Using this equation, we see that the second derivative is negative
\begin{align*}
	\frac{\diff^2}{\diff x^2}\Big |_{x=x_0} I_+	& = a''(x_0)-a'(x_0)b(x_0) - a(x_0)b'(x_0)\\
	&=- a(x_0) \Big ( f^2(x_0) + f'(x_0) - \frac{a''(x_0)}{a(x_0)} \Big )\\
	\hspace{58mm} &= - a(x_0) \cdot K_{g,b}(x_0,y,\pi/2). \hspace{59mm} \qed
	\end{align*}
\end{rem}

From Lemma \ref{l:critI}, we see that $\mathrm{Crit}I=\varnothing$ yields some strong dynamical consequences. First of all, thanks to Condition (c), we have a function $x:\R/\langle b\rangle\Z\times\R/2\pi\Z\to\R/\Z$ which is the unique solution of $I=I(x(I,\theta),\theta)$ for every $(I,\theta)$. Differentiating this relation with respect to $\theta$ we get 
\begin{equation}\label{eq:xtheta}
\frac{\partial x}{\partial \theta}=-a(x)\cos\theta\frac{\partial x}{\partial I}.
\end{equation}
Using the function $x$ we can parametrize $\mathrm S\T^2$ by the coordinates $(I,y,\theta)$. Moreover, by Condition (d), we can parametrize every $(g,b)$-geodesic by the angle $\theta$. If we do so, then the only non-trivial differential equation in the $(I,y,\theta)$ coordinates is the one determining $y$. Thanks to \eqref{hamiltoneq} and \eqref{eq:dottheta}, it reads
\begin{equation}\label{e:ytheta}
\frac{\diff y}{\diff \theta}=-\sin\theta\frac{\partial x}{\partial I}.
\end{equation}
If we parametrize the global surfaces of section $\theta^{-1}(\theta_0)$ appearing in Condition (f) using coordinates $(I,y)$, then we have a first return map of the form $(I,y)\mapsto(I,y+\Delta(I))$, where $\Delta:\R/\langle b\rangle\Z\to\R$ is the function
\[
\Delta(I):=\int_{\frac{\R}{2\pi\Z}}\frac{\diff y}{\diff \theta}\diff\theta=-\int_{\frac{\R}{2\pi\Z}}\sin\theta\frac{\partial x}{\partial I}(I,\theta)\diff\theta.
\]
\begin{lem}\label{l:SI}
Assume that $\mathrm{Crit}I=\varnothing$ and let $\Delta$ be defined as above. Then
\[
\frac{\diff S}{\diff I}=\Delta,
\]
where $S:\R/\langle b\rangle\Z\to\R$ is the function
\begin{equation}\label{e:S}
S(I):=-\int_{\frac{\R}{2\pi\Z}}a(x(I,\theta))\cos^2\theta\frac{\partial x}{\partial I}(I,\theta)\diff\theta.
\end{equation}
\end{lem}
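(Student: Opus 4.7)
The plan is to rewrite $S(I)$ in a form that, after one integration by parts in $\theta$, reproduces the integral defining $\Delta(I)$. The key ingredient is \eqref{eq:xtheta}, namely $\partial_\theta x = -a(x)\cos\theta\,\partial_I x$; multiplying by $\cos\theta$ gives $a(x)\cos^2\theta\,\partial_I x = -\cos\theta\,\partial_\theta x$, and substituting into the definition \eqref{e:S} yields at once
\begin{equation*}
S(I) \;=\; \int_{\R/2\pi\Z}\cos\theta\cdot\frac{\partial x}{\partial\theta}(I,\theta)\,\diff\theta.
\end{equation*}

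To integrate by parts in $\theta$ I need a single-valued primitive of $\partial_\theta x$, which forces me to work with a lift rather than with $x$ itself, since $x$ is $\R/\Z$-valued. I would introduce the lift $\tilde x:\R\times(\R/2\pi\Z)\to\R$ defined as the unique solution of $a(\tilde x)\sin\theta-B(\tilde x)=I$, where $B:\R\to\R$ is the chosen primitive of $ab$ and $I$ is now regarded as an element of $\R$. Existence, uniqueness and smoothness of $\tilde x$ follow from Condition~(c) of Lemma~\ref{l:critI}, which guarantees strict monotonicity of $x\mapsto a(x)\sin\theta-B(x)$. The quasi-periodicity $B(x+1)=B(x)+\langle b\rangle$ translates into $\tilde x(I-\langle b\rangle,\theta)=\tilde x(I,\theta)+1$, so $\partial_I\tilde x$ and $\partial_\theta\tilde x$ descend to their unlifted counterparts; moreover, the $2\pi$-periodicity in $\theta$ of the defining relation together with uniqueness shows that $\tilde x(I,\cdot)$ is itself $2\pi$-periodic.

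The boundary term of the integration by parts therefore vanishes, leaving $S(I)=\int_0^{2\pi}\sin\theta\cdot\tilde x(I,\theta)\,\diff\theta$. Differentiating under the integral in $I$, using $\partial_I\tilde x=\partial_I x$, and recognising the resulting integrand via \eqref{e:ytheta} yields the claimed identity $\tfrac{\diff S}{\diff I}=\Delta$. The main obstacle is the lifting step just described, which is what permits the integration by parts despite the multi-valued nature of $x$; once this is in place everything reduces to a routine differentiation under the integral sign.
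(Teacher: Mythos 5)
Your argument is correct and is essentially the paper's proof run backwards: the paper defines $\tilde S(I)=-\int_{\R/2\pi\Z}x(I,\theta)\sin\theta\,\diff\theta$ with $x$ replaced by its lift, observes that this is a primitive of $\Delta$, and then identifies $\tilde S$ with $S$ by exactly your integration by parts combined with \eqref{eq:xtheta}, so your careful construction of the lift $\tilde x$ (its $2\pi$-periodicity in $\theta$ and quasi-periodicity in $I$) merely spells out what the paper compresses into ``we identify $x$ with its lift to $\R$''. One caveat, shared with the paper's own computation: tracked literally, your last step gives $\frac{\diff S}{\diff I}=\int_{\R/2\pi\Z}\sin\theta\,\frac{\partial x}{\partial I}(I,\theta)\,\diff\theta$, which by \eqref{e:ytheta} equals $-\Delta(I)$ rather than $+\Delta(I)$; the identical sign slip appears in the paper's integration-by-parts step, and it is harmless for the later applications, where only the vanishing of $\diff S/\diff I$ (equivalently of $\Delta$) is used.
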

\begin{proof}
We define $\tilde S:\R\to\R$ by
\[
\tilde S(I)=-\int_{\frac{\R}{2\pi\Z}}x(I,\theta)\sin\theta\diff\theta,
\]
where we identify $x$ with its lift to $\R$. It is clear that $\tilde S$ is a primitive of the lift of $\Delta$. We are left to prove that $\tilde S$ is the lift of $S$. Integrating by parts, we get 
\[
\tilde S(I)=\int_{\frac{\R}{2\pi\Z}}\frac{\partial x}{\partial \theta}\cos\theta\diff\theta=\int_{\frac{\R}{2\pi\Z}}-a(x)\cos\theta\frac{\partial x}{\partial I}\cos\theta\diff\theta=S(I),
\]
where we used \eqref{eq:xtheta} in the second equality.
\end{proof}

We collect the main properties of $S$ in the next lemma.

\begin{lem}
Assume that $\mathrm{Crit}I=\varnothing$ and let $S$ be defined as above. Then
\[
\int_{\frac{\R}{\langle b\rangle\Z}}S(I)\diff I=\pi.
\]
Moreover, an element $I\in\R/\langle b\rangle\Z$ is a critical point of $S$ if and only if the $(g,b)$-geodesics $\gamma_I$ with integral $I$ belong to $\Lambda^+(\T^2)$ (namely are contractible and with turning number $+1$). Moreover,
\[
\ell_{(g,b)}(\gamma_I)=S(I).
\]
In particular, $(g,b)$ is Zoll if and only if $S$ is constant (and equal to $\pi/\langle b\rangle$).
\end{lem}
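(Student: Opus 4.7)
I would prove the four assertions in sequence, exploiting that $(I,y,\theta)$ parametrise $\mathrm S\T^2$ and that $\diff S/\diff I=\Delta$ by Lemma~\ref{l:SI}. For the integral identity, Fubini applied to the definition of $S(I)$ together with the change of variable $I\mapsto x=x(I,\theta)$ at fixed $\theta$ gives
\[
\int_{\R/\langle b\rangle\Z}a\bigl(x(I,\theta)\bigr)\tfrac{\partial x}{\partial I}\,\diff I=-\int_0^1 a\,\diff x=-1,
\]
since $I\mapsto x(I,\theta)$ has degree $-1$ by Lemma~\ref{l:critI}(3) and the normalisation \eqref{e:norm} holds; therefore $\int_{\R/\langle b\rangle\Z}S(I)\,\diff I=\int_0^{2\pi}\cos^2\theta\,\diff\theta=\pi$.

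Next I would show that $I$ is critical for $S$ iff $\gamma_I\in\Lambda^+(\T^2)$. Lemma~\ref{l:SI} reduces the critical-point condition to $\Delta(I)=0$. Since $\theta$ is strictly increasing along every flow line by Lemma~\ref{l:critI}(4), an orbit at level $I$ closes exactly when, after $\theta$ has turned by $2\pi k$ for some minimal $k\ge 1$, the coordinate $y$ returns modulo $L\Z$, i.e.\ $k\Delta(I)\in L\Z$. Membership in $\Lambda^+(\T^2)$ demands that the lift to $\mathrm S\T^2$ be homotopic to a single oriented fibre, which forces turning number $+1$ (so $k=1$) and contractibility of the planar projection (so $y$ returns exactly, not merely modulo $L\Z$); together these give $\Delta(I)=0$.

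For the action identity I would parametrise $\gamma_I$ by $\theta\in[0,2\pi]$. The length equals the integral of the Liouville primitive along the orbit inside $H^{-1}(1/2)$: combining $\cos\theta\,\diff x+a\sin\theta\,\diff y$ with \eqref{eq:xtheta} and \eqref{e:ytheta} gives
\[
\ell_g(\gamma_I)=\int_0^{2\pi}\bigl(\cos\theta\,\diff x+a\sin\theta\,\diff y\bigr)=-\int_0^{2\pi}a(x)\tfrac{\partial x}{\partial I}\,\diff\theta.
\]
Since $b\mu_g=\diff(B(x)\,\diff y)$ on the universal cover $\R^2$, lifting the contractible $\gamma_I$ and applying Stokes to the bounded planar disc yield $\int_{D^2}\Gamma^*(b\mu_g)=-\int_0^{2\pi}B(x)\sin\theta\,\tfrac{\partial x}{\partial I}\,\diff\theta$. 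The conservation law $B(x(I,\theta))-a(x(I,\theta))\sin\theta=-I$ allows the substitution $B=a\sin\theta-I$, splitting the flux as $-\int_0^{2\pi}a\sin^2\theta\,\tfrac{\partial x}{\partial I}\,\diff\theta-I\Delta(I)$; at a critical point of $S$ the cross term vanishes, and then $\ell_{(g,b)}(\gamma_I)=\ell_g(\gamma_I)-\int_{D^2}\Gamma^*(b\mu_g)$ telescopes via $1-\sin^2\theta=\cos^2\theta$ to $S(I)$.

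Finally, since $\T^2\neq S^2$, Zoll is equivalent to every $(g,b)$-geodesic being closed by the Seifert classification recalled in the introduction. If every orbit closes, then $k(I)\Delta(I)\in L\Z$ for some $k(I)\in\N$, so the continuous map $\Delta/L:\R/\langle b\rangle\Z\to\R$ takes values in $\Q$ everywhere; by the intermediate value theorem it is constant, and $\int_{\R/\langle b\rangle\Z}\Delta\,\diff I=0$ forces $\Delta\equiv 0$, i.e.\ $S$ is constant. Conversely $S\equiv c$ gives $\Delta\equiv 0$, so all orbits close after exactly one $\theta$-turn; the value $c=\pi/\langle b\rangle$ is then forced by the first identity. \emph{The main obstacle} is Part~3: one must verify that the capping $\Gamma$ obtained by projecting a homotopy from $(\gamma,\dot\gamma)$ to a fibre in $\mathrm S\T^2$ computes the same flux as the lifted planar disc on which Stokes is applied, handle all orientation signs, and recognise that the anomalous cross term $I\Delta(I)$ vanishes precisely at critical points of $S$.
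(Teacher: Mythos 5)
Your proposal is correct and follows essentially the same route as the paper: Fubini plus the orientation-reversing change of variables $I\mapsto x(I,\theta)$ for the integral identity, the reduction of criticality to $\Delta(I)=0$ via Lemma~\ref{l:SI}, and Stokes with the primitive $B(x)\,\diff y$ together with the substitution $B=a\sin\theta-I$ (the cross term $I\Delta(I)$ vanishing at critical points) for the action identity. Your only deviations are cosmetic: computing $\ell_g(\gamma_I)$ via the tautological one-form instead of $\int\diff\theta/\dot\theta$, and deducing $\Delta\equiv0$ from Zoll via the continuity/rationality argument plus $\int\Delta\,\diff I=0$, where the paper instead uses (tersely) that Zoll geodesics lie in $\Lambda^+(\T^2)$.
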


\begin{proof}
We compute
\begin{align*}
\int_{\frac{\R}{\langle b\rangle\Z}}S(I)\diff I &=\int_{\frac{\R}{2\pi\Z}}\cos^2\theta\Big(\int_{\frac{\R}{\langle b\rangle\Z}}-a(x(I,\theta))\frac{\partial x}{\partial I}\diff I\Big)\diff\theta\\
				&=\int_{\frac{\R}{2\pi\Z}}\cos^2\theta\Big(\int_{\frac{\R}{\Z}}a(x)\diff x\Big)\diff\theta\\
				&=\int_{\frac{\R}{2\pi\Z}}\cos^2\theta\diff\theta\\
				&=\pi,
\end{align*}
where we used \eqref{e:norm} and the fact that for every $\theta$ the map $I\mapsto x(I,\theta)$ reverses the orientation.

From the definition of the first-return map we see that $\Delta(I)=0$ if and only if the $(g,b)$-geodesics with integral $I$ are contractible (and with turning number $+1$). By Lemma \ref{l:SI}, the correspondence between critical points of $S$ and elements in $\Lambda^+(g,b)$ follows. It remains to compute
\[
\ell_{(g,b)}(\gamma_I)=\ell_g(\gamma_I)-\int_{D^2}\Gamma^*_I(ab\diff x\wedge\diff y).
\]
For the first term we use \eqref{eq:dottheta}
\[
\ell_g(\gamma_I)=\int_{\frac{\R}{2\pi\Z}}\frac{1}{\dot\theta}\diff\theta=-\int_{\frac{\R}{2\pi\Z}}a(x)\frac{\partial x}{\partial I}\diff\theta.
\]
For the second term we use Stokes Theorem, the definition of the first integral and \eqref{e:ytheta}
\begin{align*}
\int_{D^2}\Gamma^*_I(ab\diff x\wedge\diff y)& =\int_{\frac{\R}{2\pi\Z}}\gamma_I^*(B(x)\diff y)\\
						&=\int_{\frac{\R}{2\pi\Z}}(a(x)\sin\theta-I)\diff y\\
						&=\int_{\frac{\R}{2\pi\Z}}a(x)\sin\theta\frac{\diff y}{\diff\theta}\diff\theta-I\int_{\frac{\R}{2\pi\Z}}\diff y\\
						&=-\int_{\frac{\R}{2\pi\Z}}a(x)\sin\theta\sin\theta\frac{\partial x}{\partial I}\diff\theta-I\cdot 0.
\end{align*}
Putting the two pieces together yields the desired formula.

Finally, the assertion about Zoll systems is a consequence of the fact that $\int_{\frac{\R}{\langle b\rangle\Z}}\diff I=\langle b\rangle$.
\end{proof}

\begin{proof}[Proof of Theorem \ref{thm:main2}]
By the previous lemma, we have
\begin{align*}
\inf_{\gamma\in\Lambda^+(g,b)}\ell_{(g,b)}(\gamma) &=\min S \leq \frac{\int_0^{\langle b\rangle}S(I)\diff I}{\langle b\rangle} =\frac{\pi}{\langle b\rangle}.
\end{align*}
The inequality is strict if and only if $S$ is not constant, which happens if and only if $(g,b)$ is not Zoll.
\end{proof}

\section{Construction of symmetric Zoll magnetic systems on the flat torus}
\label{s:3}

From the previous section, we know that a rotationally symmetric $(g,b)$ on the two-torus is Zoll if and only if the function $S:\R/\langle b\rangle\Z\to\R$ is constant (and equal to $\pi/\langle b\rangle$). This condition is equivalent to asking that the Fourier coefficients
\[
\hat S(m)=\int_0^{\langle b\rangle}S(I)e^{-\frac{2\pi im}{\langle b\rangle}I}\diff I
\]
vanish for all $m\in\Z\setminus\{0\}$. These coefficients, although in general not computable, admit an explicit formula when the torus is flat, 
namely when $a\equiv 1$. This formula involves the Bessel function (see \cite{Watson:1944})
\[
J_1:(0,\infty)\to \R,\quad J_1(\xi)=\xi\int_0^{2\pi}\cos\big(\xi\sin\theta\big)\cos^2\theta\diff\theta.
\]
The zeros of $J_1$ are all simple and form a strictly increasing, divergent sequence $(\xi_k)_{k\in\N}$.

\begin{lem}\label{l:fourier}
Let $(g_\con,b)$ be a rotationally symmetric magnetic system with $\mathrm{Crit}I=\varnothing$. Then the function 
$B:\R/\Z\to\R/\langle b\rangle\Z$ is invertible and we have
\[
S(I)=\int_0^{2\pi}f(\sin\theta-I)\cos^2\theta\diff\theta,\qquad f:=\frac{\diff B^{-1}}{\diff u}.
\]
Therefore,
\[
\hat S(m)=\frac{\langle b\rangle}{2\pi |m|}J_1\Big(\frac{2\pi |m|}{\langle b\rangle}\Big)\hat f(-m),\qquad\forall\,m\neq0.
\]
\end{lem}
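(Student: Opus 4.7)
The plan is to solve explicitly for the coordinate $x(I,\theta)$ using flatness, plug into \eqref{e:S} to obtain the integral formula for $S$, and then compute $\hat S(m)$ by Fubini plus a periodicity-absorbed shift that isolates $\hat f(-m)$ and the Bessel integral.

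First, since $a\equiv 1$, Lemma \ref{l:critI} tells us that $\mathrm{Crit}\,I=\varnothing$ is equivalent to $b>0$. Thus $B'(x)=b(x)>0$, so the lift $B:\R\to\R$ is a diffeomorphism; combined with $B(x+1)=B(x)+\langle b\rangle$ this yields the desired diffeomorphism $B:\R/\Z\to\R/\langle b\rangle\Z$, and $f:=(B^{-1})'$ is a smooth positive $\langle b\rangle$-periodic function. Solving $I=\sin\theta-B(x)$ for $x$ gives $x(I,\theta)=B^{-1}(\sin\theta-I)$, hence $\partial x/\partial I=-f(\sin\theta-I)$, and substituting into \eqref{e:S} with $a\equiv 1$ yields $S(I)=\int_0^{2\pi}f(\sin\theta-I)\cos^2\theta\,\diff\theta$.

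Next, I would apply Fubini to interchange the $\theta$- and $I$-integrals in $\hat S(m)$ and, for fixed $\theta$, substitute $u=\sin\theta-I$ in the inner integral. Since both $f(u)$ and $e^{2\pi imu/\langle b\rangle}$ have period $\langle b\rangle$, the translated interval still captures a full period, and the inner integral becomes $e^{-2\pi im\sin\theta/\langle b\rangle}\hat f(-m)$. This leaves
\[
\hat S(m)=\hat f(-m)\int_0^{2\pi}\cos^2\theta\,e^{-\frac{2\pi im\sin\theta}{\langle b\rangle}}\diff\theta.
\]
The parity $\theta\to-\theta$ eliminates the imaginary part, so the $\theta$-integral reduces to $\int_0^{2\pi}\cos\bigl((2\pi|m|/\langle b\rangle)\sin\theta\bigr)\cos^2\theta\,\diff\theta$, which by the definition of $J_1$ equals $\tfrac{\langle b\rangle}{2\pi|m|}J_1(2\pi|m|/\langle b\rangle)$. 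The substantive points in this otherwise routine calculation are the sign coming from $\partial x/\partial I$, the fact that the $\theta$-dependent translation in the $u$-integral is absorbed by $\langle b\rangle$-periodicity so that $\theta$ survives only in the exponential prefactor, and the emergence of the absolute value in the Bessel argument from the evenness of $\xi\mapsto\int_0^{2\pi}\cos(\xi\sin\theta)\cos^2\theta\,\diff\theta$.
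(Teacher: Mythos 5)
Your proposal is correct and follows essentially the same route as the paper: solve $x(I,\theta)=B^{-1}(\sin\theta-I)$ using the positivity of $b$ (from Lemma \ref{l:critI} with $a\equiv1$), substitute into \eqref{e:S}, then compute $\hat S(m)$ via Fubini, the shift $u=\sin\theta-I$ absorbed by $\langle b\rangle$-periodicity, and the parity argument yielding the cosine integral and the absolute value in the Bessel argument. The extra details you supply (invertibility of the lift of $B$, periodicity of $f$, the sign of $\partial x/\partial I$) are exactly the points the paper treats more tersely.
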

\begin{proof}
By Lemma \ref{l:critI}, the condition $\mathrm{Crit}I=\varnothing$ is equivalent to the positivity of $b$. Thus, $B$ is invertible and from the definition of $I$, we get $x(I,\theta)=B^{-1}(\sin\theta-I)$. Hence, \eqref{e:S} yields the desired formula for $S$. For the Fourier coefficients we compute:
\begin{align*}
\hat S(m)&=\int_0^{2\pi}\Big(\int_0^{\langle b\rangle}f(\sin\theta-I)e^{-\frac{2\pi im}{\langle b\rangle}I}\diff I\Big)\cos^2\theta\diff\theta\\
&=\int_0^{2\pi}\Big(\int_0^{\langle b\rangle}f(J)e^{-\frac{2\pi im}{\langle b\rangle}(\sin\theta-J)}\diff J\Big)\cos^2\theta\diff\theta\\
&=\int_0^{2\pi}\Big(\int_0^{\langle b\rangle}f(J)e^{-\frac{2\pi i(-m)}{\langle b\rangle}J}\diff J\Big)e^{-\frac{2\pi im}{\langle b\rangle}\sin\theta}\cos^2\theta\diff\theta\\
&=\hat f(-m)\int_0^{2\pi}e^{-\frac{2\pi im}{\langle b\rangle}\sin\theta}\cos^2\theta\diff\theta.
\end{align*}
The desired formula follows using $e^{iz}=\cos z+i\sin z$ and $\sin (\xi\sin(-\theta))\cos^2(-\theta)=-\sin(\xi\sin \theta)\cos^2\theta$.
\end{proof}

We are now in position to prove Theorem \ref{thm:main}.

\begin{proof}[Proof of Theorem \ref{thm:main}]
We assume first that $\langle b\rangle$ does not belong to the set $\mathcal B$ defined in \eqref{setB}. Then, 
$$J_1(2\pi m/\langle b\rangle)\neq0,\quad \forall m\neq0.$$ 
By Lemma \ref{l:fourier}, it follows that $\hat f(-m)=0$ for all $m\neq0$, which implies that $f$, hence $b$, is a constant.  

We assume now that $\langle b\rangle\in \mathcal B$. Then, there exists $m_0\in \N$ such that $J_1(2\pi m_0/\langle b\rangle)=0$. Now, for all $s\in\R$ such that the function
\[
f_s(u)=\frac{1}{\langle b\rangle}+s\cos\Big(\frac{2\pi m_0}{\langle b\rangle}(u-u_0)\Big)
\]
is positive, there exists an orientation preserving diffeomorphism
\begin{equation}\label{e:Fs}
F_s:\R/\langle b\rangle\Z\to\R/\Z,\qquad F_s(u)=\frac{u}{\langle b\rangle}+\frac{s\langle b\rangle}{2\pi m_0}\sin\Big(\frac{2\pi m_0}{\langle b\rangle}(u-u_0)\Big)
\end{equation}
such that $\tfrac{\diff F_s}{\diff u}=f_s$, and the function
\begin{equation}\label{e:bs}
b_s=\frac{\diff F_s^{-1}}{\diff x}
\end{equation}
makes the pair $(g_\con,b_s)$ into a Zoll system. Indeed, $\hat f_s(m)=0$ if $m\neq0,m_0$ and $J_1(2\pi m_0/\langle b\rangle)=0$ ensure that $\hat S(m)=0$ for all $m\neq0$. It is now immediate to check that $f_s$ is positive if and only if $|s|<\langle b\rangle^{-1}$ and that $\inf f_s$ tends to $0$ as $|s|\to \langle b\rangle^{-1}$. This last property implies that $\max b_s$ diverges as $|s|\to \langle b\rangle^{-1}$. Moreover,
\[
\frac{\diff b_s}{\diff s}(0)=-\frac{\diff f_s}{\diff s}(0)=-\cos\Big(\frac{2\pi m_0}{\langle b\rangle}(u-u_0)\Big)\neq0.\qedhere
\]
\end{proof}
\begin{rem}
The proof of Theorem \ref{thm:main} shows that the solution $f_s$ constructed above is the only one giving rise to a Zoll pair if $d(\langle b\rangle)=1$, namely if $\langle b\rangle$ has the property that there exists a unique $m_0$ such that $2\pi m_0/\langle b\rangle$ is a zero of $J_1$ (see the discussion after Figure \ref{figure1}). Moreover, shifting the parameter $u_0$ corresponds to shifting the $x$ variable in $b$ and so the associated $(g_\con,b)$-geodesics get also shifted in the $x$-direction.
\end{rem}
\begin{rem}\label{r:final}
Using the formula for $F_s$ with $u_0=0$ obtained above, we get an explicit expression for $(g_\con,b)$-geodesics $(x_I,y_I):\R\to\T^2$ parametrized by $\theta$. Indeed,
\[
x(I,\theta)=B^{-1}(\sin\theta-I)=F_s(\sin\theta -I),\qquad \frac{\partial x}{\partial I}=-f_s(\sin\theta-I).
\]
Thus, we get
\[
\left\{\begin{aligned}
x_I(\theta)&=\frac{\sin\theta-I}{\langle b\rangle}+\frac{s\langle b\rangle}{2\pi m_0}\sin\Big(\frac{2\pi m_0}{\langle b\rangle}(\sin\theta-I)\Big),\\
y_I(\theta)&=-\frac{1}{\langle b\rangle}\cos\theta+s\int_{-\pi/2}^\theta\sin\theta\cos\Big(\frac{2\pi m_0}{\langle b\rangle}(\sin\theta-I)\Big)\diff\theta,
\end{aligned}\right.
\] 
where we have used \eqref{e:ytheta} to express $y_I$. We used these formulas to produce Figure \ref{figure1} and \ref{figure2} using Python.
\end{rem}

\subsection*{Acknowledgements}
We thank Alberto Abbondandolo and Serge Tabachnikov for fruitful discussions, and Mattia Carlo Sormani for the precious help with the numerical integration.
Luca Asselle is partially supported by the DFG-grant AS 546/1-1 "Morse theoretical methods in Hamiltonian dynamics". Gabriele Benedetti was partially supported by the National Science Foundation under Grant No.~DMS-1440140 while in residence at the Mathematical Sciences Research Institute in Berkeley, California, during the Fall 2018 semester for the program "Hamiltonian systems, from topology to applications through analysis". 

\bibliography{_biblio}
\bibliographystyle{amsalpha}

\end{document}